\newtheorem{theorem}{Theorem}[section]
\newtheorem{proposition}[theorem]{Proposition}
\theoremstyle{definition}
\newtheorem{definition}[theorem]{Definition}
\newcommand{\R}{\mathbb{R}}
\newcommand{\M}{\mathcal{M}}
\def\P{{\mathcal P }}
\def\RF{\rho_{F}}     
\def\RW{\rho_{W}}  
\newcommand\blue[1]{\textcolor{blue}{#1}}
\title{Robust Policy Selection and Harvest Risk Quantification for Natural Resources Management under Model Uncertainty} 
\author[a,b]{Georgios I. Papayiannis} 
\affil[a]{\footnotesize Hellenic Naval Academy, Department of Naval Sciences, Section of Mathematics, Mathematical Modeling and Applications Laboratory, Piraeus, GR}
\affil[b]{\footnotesize Athens University of Economics \& Business, Department of Statistics, Stochastic Modeling and Applications Laboratory, Athens, GR}
\begin{document}

\AddToShipoutPictureBG*{%
  \AtPageUpperLeft{%
    \hspace{\paperwidth}%
    \raisebox{-\baselineskip}{%
      \makebox[0pt][r]{\blue{Accepted in Journal of Dynamics and Games} }
}}}%

\maketitle

\begin{abstract}
In this work the problem of optimal harvesting policy selection for natural resources management under model uncertainty is investigated. Under the framework of the neoclassical growth model dynamics, the associated optimal control problem is investigated by introducing the concept of model uncertainty on the initial conditions of the operational procedure. At this stage, the notion of convex risk measures, and in particular the class of Fr\'echet risk measures, is employed in order to quantify the total operational and marginal risk, whereas simultaneously obtaining robust to model uncertainty harvesting strategies.    
\end{abstract}

\noindent {\bf Keywords:} Fr\'echet risk measures; model uncertainty; optimal control; robust decision making; Wasserstein barycenter;

\section{Introduction}

Optimal decision making is traditionally an extremely active field in economics, while recently, there is a growing interest in natural resources management \cite{boucekkine2013, brock2014a, brock2014b, brock2014c, clark2010, fabbri2020, xepa2016}. The focus has moved to the study of optimal control problems taking into account uncertainty issues \cite{baltas2018, baltas2019robust, baltas2022, bielecki2019, li2012} which occur very often in practice, either because the exact evolution law of the quantities of interest (e.g. the evolution of natural capital in terms of environmental economics) is not possible to be precisely modeled, or exogenous factors act to the underlying dynamics causing random disturbances. Therefore, the treatment of uncertainty issues related to the decision making process attracts more and more the interest of the scientific community.  

From the perspective of decision making, the classical framework treating the uncertainty issue puts the decision maker into the position to provide her/his aversion preferences with respect to a certain provided/estimated model either through the multiplier preferences principle or through the constraint preferences principle \cite{hansen2001robust, maccheroni2006}. Modern approaches in decision theory investigate the tricky situation caused by the more challenging situation referred to as {\it model uncertainty} or {\it model ambiguity}, i.e. the case where a multitude of models are available, which may be divergent, their validity is not known and of course the decision maker cannot trust only one of them but should merge the available information in order to make a robust decision, with respect to model uncertainty. This task is not straightforward since several issues have to be resolved. First of all, a very important and clearly non trivial matter is to choose the appropriate metric tools to measure distances between the available models. In general, since these models are typically provided in terms of probability measures, appropriate metrization instruments should respect and be compatible with the topology of the space of probability measures in order to provide valid quantifications and solutions. However, the space of probability measures does not carry the convenient structure of a vector space, therefore the task of providing appropriate metrics with good properties and general applicability remains an interesting and important problem. Secondly, the task of aggregating the available information from the various models with an efficient manner is also a challenging one. Several approaches have been discussed in the literature where the most recent and quite popular is that of employing the notion of Fr\'echet mean or Fr\'echet barycenter \cite{frechet1948} in order to determine the mean element even in spaces where the typical notion of euclidean mean is not applicable. A very interesting approach in optimal decision making relying on this concept is proposed in \cite{petracou2021decision} where a very flexible and extendable framework to treat model uncertainty issues is analyzed. Lastly, the issue of model validity is something that someone has to be aware of, especially in cases where the decision making process is performed under a dynamic setting and possibly during the process new evidence become available. In this direction, learning schemes that determine the optimal weight allocation to each model, taking into account criteria assessing the models' performance comparing to the recorded data, are proposed and investigated under different metric tools in \cite{papayiannis2018learning}.

Risk quantification in optimal policy selection problems is of great importance, especially in the setting of natural resources management. Ideally, a natural resources manager would like to design an optimal harvesting policy on a specific domain of activity, taking into account both the system's conservation/sustainability and of course the level of turnout that is succeeded. The framework of convex risk measures \cite{follmer2002convex, frittelli2002} provides a worst-case scenario risk perspective, with respect to the factors that affect randomly the optimal control problem and allows the decision maker to provide his/hers risk aversion preferences and appropriately quantifying the related risk output. Recently, Papayiannis and Yannacopoulos \cite{papayiannis2018convex} proposed an extension of the convex risk measures framework that is compatible with cases of model uncertainty as described above, providing risk calculations on the rationale of subjective worst-case scenario but robust to model ambiguity. 

In this paper, an optimal policy selection problem concerning natural resources management (e.g. water resources, oil resources, fisheries, etc) is discussed within the framework of model uncertainty. Model uncertainty concerns the incomplete knowledge or inability of observation concerning certain parameters values which affect the spatio-temporal dynamics of the natural capital of interest. Therefore, the interest is twofold: (a) deriving the optimal control of the problem, and simultaneously (b) treating robustly the model uncertainty situation. First, the general framework and approaches are discussed in Section \ref{sec-2} and then, as a particular case of interest, the optimal harvesting problem on a discretized version of the neoclassical growth model \cite{acemoglu2009} under model uncertainty on the initial states of the natural capital at each sub-domain of interest is studied. In Section \ref{sec-3}, the optimal control problem is studied separately and the optimal harvest rate strategy is derived in analytic form. In Section \ref{sec-4}, the risk estimation and risk allocation problem is considered, providing explicit formulas for the calculation of the total and individual harvest risks under model uncertainty. Moreover, robust optimal harvest strategies are obtained in closed form formulas, employing the discussed risk quantification framework in order to treat robustly the model ambiguity issue on a human against nature game-oriented setting.

\section{Statement of the Problem}\label{sec-2}

Consider the case that a natural resources manager needs to derive an optimal policy regarding the harvesting of a physical capital, e.g. water withdrawal, oil withdrawal, biomass, etc., spatially located on a physical domain $\Omega$ a certain time interval $[0,T]$ where $T>0$ denotes the operation's time horizon. Depending the nature of the operation, the harvesting activities should be discounted by a utility function $U$ during the operational time interval and a function $g$ on the time horizon $T$ measuring the condition of the natural capital system. These functions quantify the payoff at each time instant depending on the harvest rate, let us denote it by $c(t,x)$, and the condition of the system producing the physical capital at $T$, $k(T,x)$ for all $x\in\Omega$. The latter criterion, should introduce to the problem's objective, sustainability preferences or constraints that should be taken into account from the optimal policy to be chosen. Clearly, the above criteria depend on the spatio-temporal dynamical system describing the density of the physical quantity of interest, which in general terms could be of the form
\begin{eqnarray}\label{dyn-0}
\frac{\partial}{\partial t}k(t,x) = F(t, x, k(t,x), c(t,x)), \,\,\,\,\, k(0,x)=k_0,\,\,\,\,\, \forall x\in \Omega, \,\,\, t\in[0,T]
\end{eqnarray}
where $k(t,x)$ denotes the physical capital density at point $(t,x)$, being the {\it state} variable of the system, $c(t,x)$ denotes the harvest rate at point $(t,x)$ being the {\it control} variable of the system, and $F$ the function which determines the changes on the physical quantity density at each point $(t,x)$ depending both on the state and control of the system and on other parameters modeling known factors' contributions. In the case where a plausible model for $F$ is available (for simplicity assume for the time being that this model is of a deterministic type), then the manager has to solve the optimal control (policy selection) problem
\begin{eqnarray}\label{opt-0}
\begin{array}{l}
\max_{ c \in \mathcal{H} } \left\{ \int_0^T e^{-rt} U( c(t,x) ) dt + g(k(T,x), T) \right\}, \,\,\,\, \forall x\in\Omega \\
\mbox{subject to dynamics of equation \eqref{dyn-0}} 
\end{array}
\end{eqnarray}
where $r>0$ is the discount rate (assumed constant) and $\mathcal{H}$ denotes a suitable Hilbert space (e.g., $L^2$) for the optimal controls. The above problem is some cases admits explicit solutions depending on the forms of the utility and payoff functions $U,g$. However, the assumption that the  system \eqref{dyn-0} is perfectly modeled lacks realism in general. Under most circumstances, uncertainty is presented to various aspects of the dynamical system, e.g., on the initial conditions which are usually not precisely known, to certain parameters of the drift term $F$ presenting fluctuations due to the occurrence of various random phenomena (climate conditions variations, occurrence of random spatial events, etc.). It is evident that a natural resources manager should take into account these uncertainty issues in order to derive efficient, realistic and robust policies which will not threaten possible future operations and to obtain the required sustainability levels. In this perspective, the optimal control problem needs to be reshaped into a form that takes into account and treats the discussed uncertainty issues. 

Assume that the random behaviour of the parameters in \eqref{dyn-0}, which are subject to uncertainty, is captured by some probability model $Q_0$ provided by some expert. The manager may not desire to employ directly the model $Q_0$ to derive her/his strategy but he/she may deviate from it, depending on the level of reliability that is allocated to the agent or the source that provides the information. Attempting to build such a decision framework, the concept of {\it convex risk measures} \cite{follmer2002convex} is employed. Any convex risk measure is obtained as a worst-case scenario output of a relevant maximization problem given a suitable loss function depending on certain risk factors/stressors, the random behaviour of which is described by the provided probability models. In the case under discussion, a loss function might be a function of the optimal harvest rate $c^*(t,x)$ related to problem \eqref{opt-0}. However, due to the risk factors effects, let us denote them by $Z = (Z_1, Z_2, ..., Z_d)'$ (considering them as $d$ random variables), the harvest rate, obtained as the optimal solution of problem \eqref{opt-0}, depends on the realizations of the random variable $Z$ and in order to clarify this dependence we may rewrite $c^*(t,x)$ as $c^*(t,x;Z)$. Then, an appropriate loss function could be of the form
\begin{equation}
L(Z) := \frac{1}{T}\int_0^T \frac{1}{|\Omega|}\int_{\Omega} \ell( c^*(t, x;Z)) dx \, dt
\end{equation} 
with $\ell:\R\to\R$ being an integrable function calculating the loss output at each point $(t,x)$, and $Z \sim Q$ the random variables affecting the dynamics of \eqref{dyn-0} with $Q$ being the provided/selected probability model describing the random behaviour of the risk factors $Z$. Under the perspective of convex risk measures, a robust estimate of the risk $L(Z)$ is provided by the worst-case scenario loss, expressed as the maximum expected loss over a set $\mathfrak{Q}$ of plausible probability measures that may act as possible models for the risk factors $Z$. In particular, the harvest risk related to the control problem \eqref{opt-0} is determined through the solution of the maximization problem
\begin{eqnarray}\label{risk-1}
\rho(L) = \mathbb{E}_{Q^*}[ L(Z) ] :=  \max_{Q \in \mathfrak{Q}} \mathbb{E}_{Q}[ L(Z) ]
\end{eqnarray}
where the set $\mathfrak{Q}$ is constructed according to the model uncertainty preferences of the decision maker with respect to a provided model $Q_0$, while the optimizer $Q^*$ can be employed  to provide the related robust optimal harvest strategy of \eqref{opt-0} defined as 
\begin{equation}\label{rob-0}
\widetilde{c}(t,x) := c^*(t,x;Z^*), \,\,\,\, Z^* \sim Q^*.
\end{equation}  
On problem \ref{risk-1}, the manager's preferences, which are taken into account on the derivation of $Q^*$, are translated to deviations from the provided probability model $Q_0$, so if no deviations at all are desired then it is clear that $Q^* = Q_0$. Alternatively, the model space $\mathfrak{Q}$ is formulated as
\begin{equation}
\mathfrak{Q} := \{ Q\in\mathcal{P}(\R^d)\,\,\, : \,\,\, d(Q, Q_0) \leq \eta \}
\end{equation}
where $d$ is an appropriate metric in the space of probability measures on $\R^d$, $\mathcal{P}(\R^d)$ e.g., Wasserstein distance \cite{papayiannis2018convex, petracou2021decision, santambrogio2015optimal, villani2021topics}, and $\eta>0$ is the maximum desired deviation level from model $Q_0$ in terms of the metric sense that is used. The optimization problem in \eqref{risk-1} can be equivalently expressed in a {\it multiplier-preferences} formulation instead of the current {\it constraint-preferences} representation by
\begin{eqnarray}\label{risk-2}
\rho(L) = \mathbb{E}_{Q^*}[ L(Z) ] :=  \max_{Q \in \mathcal{P}(\R^d)} \left\{ \mathbb{E}_{Q}[ L(Z) ] - \frac{1}{\gamma}d(Q,Q_0) \right\}
\end{eqnarray}
for an appropriate choice of the aversion parameter $\gamma>0$ (quantifying the manager's aversion preferences from $Q_0$). This concept is further generalized and extended to the very interesting case where a multitude of models is provided for describing the behaviour of random variable $Z$ possibly divergent and with varying probabilities of realization \cite{papayiannis2018convex, petracou2021decision}. In optimal harvesting problems, different probability models could identify different scenarios about the conditions in the spatio-temporal domain that the natural resources optimal management problem is considered. 

Following the discussion above, a robust treatment of a natural resources management problem is condensed to three steps: (i) identification of the risk factors affecting the system evolution and to obtain some model/-s describing these random behaviours, (ii) derivation of the optimal aggregate model describing the random part of the problem taking into account the manager's preferences and (iii) to derive (if it is possible in explicit form) the optimal harvest rates at each spatio-temporal location with a robust manner, i.e. discounted by the probability measure $Q^*$. In a more compact form, the problem under study may be stated as
\begin{eqnarray}\label{opt-2}
\begin{array}{l}
\max_{Q \in \mathfrak{Q}} \mathbb{E}_{Q}[ L(Z) ] \\
\mbox{subject to:} \\
c^* \mbox{ maximizer of \eqref{opt-0} for any } Z \in \R^d, \,\,\, Z\sim Q 
\end{array}
\end{eqnarray}
Note than in the above formulation are included both the one prior model case and the multitude of models case since the model space $\mathfrak{Q}$ is flexibly determined depending on the provided information and the manager's preferences. 

In next sections, a version of problem \eqref{opt-2} is examined where the derivation of solutions in explicit form both for the optimal probability measure related to the risk estimation problem \eqref{opt-2} and the optimal harvest rate related to the inner optimal control problem \eqref{opt-0} is possible, enabling through combination the derivation of robust optimal strategies in the sense of \eqref{rob-0}.

\section{A Natural Resources Optimal Harvesting Problem and its Associated Optimal Controls}\label{sec-3}

In this section a specific case of an optimal control problem for natural resources management is examined. In particular, the Neoclassical growth model is adopted as the working framework by dividing the dynamics to regions of interest by appropriate spatial averaging. Then, solutions for the optimal control problem are obtained in closed form.

\subsection{A discretized formulation of the optimal control problem}

Assume that the spatio-temporal density $k(t,x)$ of the physical capital of interest is described by the dynamics
\begin{eqnarray}\label{dyn-00} 
\left\{
\begin{array}{l}
\frac{\partial}{\partial t} k(t,x) = (\mathcal{L} + A)\, k(t,x) - B\, c(t,x), \\
k(0,x)=k_0, \,\,\, x\in\Omega, \,\,\, t \in [0,T]
\end{array}
\right.
\end{eqnarray}
where $\mathcal{L}$ denotes the Laplacian operator, $A, B$ are some linear operators, $c(t,x)$ denotes the harvest rate (or consumption rate) at point $(t,x)$ and $k_0$ denotes the initial state of the physical capital at each spatial location. Spatial dependence of the problem can be allocated to certain sub-regions, let us say $N$ in number, dividing the initial domain of interest $\Omega$ to $N$ sub-domains $\Omega_i$ such that $\Omega_i \cap \Omega_j = \O$ for $i \ne j$ and $\cup_{i=1}^N \Omega_i = \Omega$. In each sub-region $i$, the physical quantity's density at time $t$ is represented by averaging over the spatial domain $\Omega_i$ the natural capital concentration $k(t,x)$, obtaining the expressions
\begin{eqnarray}
k_i(t) := \frac{1}{|\Omega_i|}\int_{\Omega_i} k(t,x) dx, \,\,\,\, i=1,2,...,N
\end{eqnarray} 
where similarly, the local harvest rates are represented in averaged formulations by
\begin{eqnarray}
c_i(t) := \frac{1}{|\Omega_i|}\int_{\Omega_i} c(t,x) dx, \,\,\,\, i=1,2,...,N.
\end{eqnarray} 
In this formulation, for each time instant $t$ the physical quantity at each sub-region is described in vector form
\begin{equation}
k(t) = \begin{pmatrix} k_1(t), & k_2(t), & \ldots, & k_N(t) \end{pmatrix}',  
\end{equation}
the harvest intensity by
\begin{equation}
c(t) = \begin{pmatrix} c_1(t), & c_2(t), & \ldots, & c_N(t) \end{pmatrix}'. 
\end{equation}
Upon the aforementioned discretization through spatial averaging, the resulting model describing the natural capital dynamics is expressed as
\begin{eqnarray}\label{dyn-1}
\left\{ \begin{array}{l}
\frac{\partial}{\partial t} k_i(t) = \left( ( \mathcal{L}_G+A_{D}) \, k(t) \right)_i - B_{D,i}\, c_i(t), \\
k_i(0) = k_{0,i}\in \mathbb{R} \end{array} \right. \,\,\,\, i=1,2,...,N,
\end{eqnarray}
where $\mathcal{L}_G$ denotes the graph Laplacian matrix of dimension $N \times N$ being the discrete analog to the Laplacian operator $\mathcal{L}$. Identifying the divided spatial domain $\Omega$ with an undirected graph $\mathcal{G} = (\mathcal{V},\mathcal{E})$ with the set of vertices $\mathcal{V} = \{1,2,...,N\}$ representing the sub-domains $\{\Omega_1, ..., \Omega_N\}$ and the set of edges $\mathcal{E}$ representing the communications/interactions between the sub-domains, then using symmetric real valued weights $w_{ij}=w_{ji}\in\R$ assessing the intensity of interaction between sub-domains $\Omega_i$ and $\Omega_j$, the graph Laplacian matrix is defined as the matrix $\mathcal{L}_G$ with elements
\begin{equation}
\mathcal{L}_{G,ij} = \left\{ 
\begin{array}{ll}
- w_{ij}, & \mbox{if } i \mbox{ communicates with } j\\
\sum_{k \sim i} w_{ik}, & i=j\\
0, & \mbox{otherwise} 
\end{array} \right.
\end{equation} 
where the summation is performed over the edges (sub-domains) that communicate. In the simplest case, the graph Laplacian weights may be chosen as $w_{ij}=1$ when two vertices are connected and $w_{ij}=0$ otherwise, while the graph Laplacian matrix can be recovered by the decomposition $\mathcal{L}_G = D-A$ with $A$ being the symmetric {\it adjacency matrix} containing the weights $w_{ij}$, and $D$ being the diagonal {\it degree matrix} where the $i-th$ diagonal element is the total number of vertices that communicate with vertex $i$. As a result, the graph Laplacian matrix $\mathcal{L}_{G}$, describes the natural capital flows and interactions between and within the sub-domains $\Omega_i$ for $i=1,2,...,N$ by a graph-type model. Moreover, the rest matrices in \eqref{dyn-1}
\begin{eqnarray}
A_{D} = diag( A_1, A_2, ..., A_N)', \,\,\,\, B_{D} = diag( B_1, B_2, ..., B_N)'
\end{eqnarray} 
are the discrete versions, obtained through spatial averaging, of the linear operators $A, B$ introduced in \eqref{dyn-00}. Clearly, in each sub-domain $\Omega_i$, the $i-th$ diagonal elements of the matrices $A_D, B_D$ correspond to the averaged spatial effects of the current sub-region related to the averaged natural capital density $k_i$ and the averaged harvest rate $c_i$, respectively, while $k_0 \in \R^N$ denote the initial density on each sub-domain of $\Omega$.

Under the framework of the Neoclassical growth model \cite{acemoglu2009, solow1999ngm} the optimal harvest rate can be chosen through the solution of the maximization problem
\begin{eqnarray}\label{opt-3}
\begin{array}{l}
\max_{ c_1(\cdot), c_2(\cdot), ..., c_N(\cdot) \in \mathcal{H}_N } \left\{ \int_0^T e^{-rt} \frac{ \left( \sum_{i=1}^N D_i c_i(t)\right)^{1-\beta}}{1-\beta} dt + g(T, k(T)) \right\}\\
\mbox{subject to dynamics of equation \eqref{dyn-1}}  
\end{array}
\end{eqnarray}
where $U(c)$ is replaced by a constant relative risk aversion (CRRA) type utility function with $\beta \in (0,1)$ denoting the elasticity parameter, $D_i$ for $i=1,2,...,N$ are spatial-dependent parameters (e.g., weights allocated by the manager to each sub-region), $g$ is the associated payoff function concerning the natural capital density at the terminal horizon $T$ and $\mathcal{H}_N$ is an appropriate space containing possible solutions (controls/harvest rates) of the problem depending only on time. For solvability reasons, let us further assume that $g(k) = \frac{e^{-rT}}{1-\beta} (\sum_{i=1}^N E_i k_i(T))^{1-\beta}$ with $E_i, \,\, i=1,2,...,N$ denoting the importance allocated by the manager to the natural capital state at time $T$ at each domain $\Omega_i$. In order to simplify notation, from now on we use the notation $\langle D, c \rangle := \sum_{i=1}^N D_i c_i(t)$ and $\langle E, k \rangle := \sum_{i=1}^N E_i k_i(t)$ introducing discretized versions of the inner products $\int_{\Omega} D(x) c(t,x) dx$ and $\int_{\Omega} E(x) k(t,x) dx$ performing averaging on the spatial effects of the operators $D, E:\mathcal{H} \to \R$ per sub-region (which is much preferable and applicable in practice). 

\subsection{Derivation of the optimal controls}

For solvability in analytic form let us assume that $E = \langle D, k_0\rangle^{\beta/(1-\beta)}$. Following the problem formulation in \eqref{opt-3}, the associated Hamilton-Jacobi-Belmann equation is
\begin{eqnarray}\label{hjb}
\begin{array}{l}
-\frac{\partial}{\partial t}v = \sup_{c \in \mathcal{\widetilde{H}}}\left\{ \langle (\mathcal{L}_G + A_D)\, k(t), \mathcal{D}v \rangle - \langle \sum_{i=1}^N B_{D,i} c_i(t), \mathcal{D}v\rangle + e^{-rt} \frac{\langle D, c(t) \rangle^{1-\beta}}{1-\beta} \right\}, \\
v(T,k(T)) = g(T,k(T)) = e^{-rT} \frac{\langle E,  k(T) \rangle^{1-\beta}}{1-\beta}
\end{array}
\end{eqnarray}
where $v(t,k)$ is the value function of the problem, $\mathcal{D}v$ the derivative of the value function with respect to $k$, i.e. $\mathcal{D}v = ( \frac{\partial}{\partial k_1}v,  \frac{\partial}{\partial k_2}v, \ldots, \frac{\partial}{\partial k_N}v )'$, $\langle \cdot, \cdot \rangle$ denotes the inner product operation and $\mathcal{\widetilde{H}}$ is an appropriate space for the controls. 

The following proposition, state the solutions in explicit form for the value function satisfying the Hamilton-Jacobi-Belmann equation stated in \eqref{hjb} and the optimal harvest rates of the associated problem \eqref{opt-3}.  

\begin{proposition}\label{prop-1}
The value function satisfying the equation \eqref{hjb} in explicit form is 
\begin{equation}\label{ovf}
v(t,k(t)) = \frac{ e^{-rt} }{ 1-\beta } \left[ \left(k_0 - \frac{\Lambda(\alpha)}{\theta}\right) e^{-\theta(T-t)} + \frac{\Lambda(\alpha)}{\theta} \right]^{\beta} \langle \alpha, k(t) \rangle^{1-\beta}
\end{equation}
while the optimal harvest rates of problem \eqref{opt-3} are 
\begin{equation}\label{ohr}
c^*_i(t) = \alpha_i \left( \frac{B_{D,i}}{D_i} \right)^{-1/\beta}\left[ \left(k_0 - \frac{\Lambda(\alpha)}{\theta}\right) e^{-\theta(T-t)} + \frac{\Lambda(\alpha)}{\theta} \right]^{-1} \langle \alpha,  k(t) \rangle
\end{equation}
for $ i=1,2,...,N$, where $\alpha = (\alpha_1, \alpha_2, ...., \alpha_N)'$ being the eigenvector corresponding to the lowest eigenvalue of the eigenvalue problem $(\mathcal{L}_G + A_D)\alpha = \lambda \alpha$ with $\|\alpha\|=1$, $\theta = (r-\lambda(1-\beta))/\beta$ and $\Lambda(\alpha) = \sum_{i=1}^N B_{D,i} (B_{D,i}/D_i)^{-1/\beta} \alpha_i^{(\beta-1)/\beta}$. 
\end{proposition}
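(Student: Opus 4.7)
The plan is to attempt a separable ansatz of the form $v(t,k)=\frac{e^{-rt}}{1-\beta}\,h(t)\,\langle\alpha,k\rangle^{1-\beta}$, with $h(t)$ a scalar function and $\alpha\in\R^N$ a unit vector to be identified, and then verify that it solves \eqref{hjb}. The form is motivated by the CRRA structure of the running payoff and the homogeneity of the terminal reward in $k$, mirroring the classical Merton-type ansatz. With this guess, $\mathcal{D}v=e^{-rt}h(t)\langle\alpha,k\rangle^{-\beta}\alpha$, so the drift term $\langle(\mathcal{L}_G+A_D)\,k,\mathcal{D}v\rangle$ becomes $e^{-rt}h(t)\langle\alpha,k\rangle^{-\beta}\langle k,(\mathcal{L}_G+A_D)\alpha\rangle$ after using the symmetry of $\mathcal{L}_G+A_D$. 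Picking $\alpha$ as a unit eigenvector with eigenvalue $\lambda$ collapses this to $\lambda\,e^{-rt}h(t)\langle\alpha,k\rangle^{1-\beta}$; choosing the eigenvector corresponding to the lowest eigenvalue is the economically meaningful branch, ensuring positivity of the value function and existence of the interior maximizer.

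I would then perform the pointwise first-order condition with respect to $c$ inside the supremum: differentiating the HJB maximand with respect to $c_j$ gives $e^{-rt}D_j\langle D,c\rangle^{-\beta}=B_{D,j}\partial_{k_j}v$, which, together with $\partial_{k_j}v=e^{-rt}h(t)\langle\alpha,k\rangle^{-\beta}\alpha_j$, yields the candidate $c_j^*(t)=\alpha_j(B_{D,j}/D_j)^{-1/\beta}\,h(t)^{-1/\beta}\langle\alpha,k(t)\rangle$. Setting $\Xi(t):=h(t)^{1/\beta}$ recovers exactly the form \eqref{ohr}. Substituting $c^*$ back into \eqref{hjb} and using the eigenvalue simplification collapses the HJB into a scalar ODE in $h(t)$. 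A short algebraic reduction shows that under the substitution $H(t):=h(t)^{1/\beta}$ the equation linearizes into
\begin{equation*}
H'(t)-\theta\,H(t)+\Lambda(\alpha)=0,
\end{equation*}
with $\theta=(r-\lambda(1-\beta))/\beta$ and $\Lambda(\alpha)=\sum_{i=1}^N B_{D,i}(B_{D,i}/D_i)^{-1/\beta}\alpha_i^{(\beta-1)/\beta}$, precisely as stated in the proposition.

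Integrating this linear ODE backward from $T$ and using the solvability hypothesis $E=\langle D,k_0\rangle^{\beta/(1-\beta)}$ to match the terminal condition $v(T,k(T))=\frac{e^{-rT}}{1-\beta}\langle E,k(T)\rangle^{1-\beta}$ with the ansatz evaluated at $t=T$ (which forces $H(T)=k_0$ in the appropriate scalar sense), gives
\begin{equation*}
H(t)=\left(k_0-\frac{\Lambda(\alpha)}{\theta}\right)e^{-\theta(T-t)}+\frac{\Lambda(\alpha)}{\theta},
\end{equation*}
i.e.\ exactly the bracketed quantity in \eqref{ovf}. Plugging $h(t)=H(t)^\beta$ back into the ansatz and into $c^*$ recovers \eqref{ovf} and \eqref{ohr} respectively. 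A closing verification step, based on the concavity of the CRRA utility in $c$ and of $v$ in $k$, confirms that $c^*$ is indeed the maximizer of the HJB argument (not merely a critical point) and that the candidate $v$ matches the terminal condition.

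The main obstacle I anticipate is the algebraic bookkeeping leading to the linearizing substitution $H=h^{1/\beta}$: one must track carefully the exponents on $\alpha_i$, $D_i$, and $B_{D,i}$ when substituting $c^*$ back into the sup, so that the two $c$-dependent terms combine into the single clean $\Lambda(\alpha)\,\langle\alpha,k\rangle^{1-\beta}$ contribution. A related subtlety is verifying that the supremum in \eqref{hjb} is genuinely attained rather than unbounded: since the utility depends on $c$ only through the aggregate $\langle D,c\rangle$ while the cost enters coordinatewise through $B_{D,i}\partial_{k_i}v$, attainment requires the alignment of the eigenvector $\alpha$ with the weights $D_i/B_{D,i}$, a compatibility condition tacitly absorbed into the solvability assumption on $E$ and the choice of $\alpha$, which must be checked to close the argument.
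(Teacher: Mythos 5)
Your proposal follows essentially the same route as the paper: a separable power ansatz $v=\frac{e^{-rt}}{1-\beta}\,h(t)\,\langle\alpha,k\rangle^{1-\beta}$ with $\alpha$ the lowest eigenvector of $\mathcal{L}_G+A_D$, first-order conditions in $c$, and reduction of the HJB to a scalar ODE, where your linearizing substitution $H=h^{1/\beta}$ is exactly what the paper's Bernoulli-equation solution formula encodes after its transformation $\psi_0=e^{\mu t}\varphi_0$. The compatibility subtlety you flag at the end (the first-order conditions determine only the aggregate $\langle D,c\rangle$ unless $B_{D,j}\alpha_j/D_j$ is suitably aligned across $j$) is a genuine point, but the paper's own proof passes over it in the same way, so your sketch is at least as complete as the published argument.
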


\begin{proof}[Proof of Proposition \ref{prop-1}]

Working with problem's \ref{opt-3} associated HJB equation \eqref{hjb} in order to obtain the optimal harvest rates one just needs to solve the problem 
$$\sup_{c_1(\cdot),...,c_N(\cdot)} \left\{ \frac{e^{-rt}}{1-\beta}\langle D, c(t) \rangle^{1-\beta} - \langle \sum_{i=1}^N B_{D,i} \, c_i(t), \mathcal{D}v \rangle \right\}$$ 
since the remaining terms do not depend on the harvest rates. Since the equation is concave with respect to $c$, it suffices to solve the first order conditions to obtain a characterization for the maximizers. Indeed, taking first order conditions with respect to each $c_i$, corresponding to the spatial average on each sub-domain $\Omega_i$, we conclude to the optimal harvest rates of problem \eqref{opt-3} with respect to the value function $v(t,k(t))$, which are 
\begin{equation}\label{ohr-1}
c_i^*(t) = e^{-rt/\beta} \left( (B_{D,i}/D_i) \,\, \mathcal{D} v_i(t) \right)^{-1/\beta}  
\end{equation}
for $i=1,2,...,N$. Assume that a suitable form for a candidate value function satisfying HJB equation \eqref{hjb} is the following
\begin{equation}\label{eq-0}
\hat{v}(t,k) =  \frac{ \psi_0(t)^{1-\beta} }{ 1-\beta }\langle \alpha, k(t) \rangle^{1-\beta}
\end{equation}
with $\alpha = (\alpha_1, \alpha_2, ...., \alpha_N)'$ being the eigenvector corresponding to the lowest eigenvalue of the eigenvalue problem $(\mathcal{L}_G+A_D)\alpha = \lambda \alpha$ with $\|\alpha\|=1$ and $\psi_0$ being a sufficiently smooth function to be determined in the sequel. Using this {\it guess}, each one of the associated HJB equation terms are calculated:
\begin{eqnarray}
&& \frac{\partial}{\partial t}v = \psi_0(t)^{-\beta} \psi_0'(t) \langle \alpha,  k(t) \rangle^{1-\beta}\\
&& (\mathcal{D}v)_i := \frac{\partial}{\partial k_i}v = \psi_0(t)^{1-\beta} \langle \alpha,  k(t) \rangle^{-\beta} \alpha_i, \,\,\,\, i=1,2,...,N
\end{eqnarray}
where the latter can be expressed in a more compact form by
\begin{equation}\label{DV}
\mathcal{D}v = \psi_0(t)^{1-\beta} \langle \alpha,  k(t) \rangle^{-\beta} \alpha.
\end{equation}
Calculating the diffusion term of \eqref{hjb} we get:
\begin{eqnarray}\label{hjb-3}
\langle (\mathcal{L}_G + A_D)\, k(t), \mathcal{D}v \rangle &=& \langle k(t), (\mathcal{L}_G + A_D)^* \mathcal{D}v \rangle \nonumber \\
					&=& \psi_0(t)^{1-\beta} \langle \alpha, k(t) \rangle^{-\beta} \langle k(t), (\mathcal{L}_G + A_D)^* \alpha \rangle \nonumber \\
					&=& \lambda \psi_0(t)^{1-\beta} \langle \alpha, k(t) \rangle^{-\beta} \langle k(t), \alpha \rangle  \nonumber \\
					&=& \lambda \psi_0(t)^{1-\beta} \langle k(t), \alpha \rangle^{1-\beta} 
\end{eqnarray}
where $\lambda$ are the eigenvalues of $(\mathcal{L}_G + A_D)$. Using the obtained form of the maximizers in \eqref{ohr-1} and substituting each HJB calculated term in \eqref{hjb} one obtains
\begin{equation}\label{hjb-1}
-\psi_0(t) = \Lambda(\alpha) \psi_0(t) + e^{-rt/\beta} \frac{\beta}{1-\beta} \Lambda(\alpha) \psi_0(t)
\end{equation} 
where 
\begin{equation}\label{Lama}
\Lambda(\alpha) = \sum_{i=1}^N B_{D,i} (B_{D,i}/D_i)^{-1/\beta} \alpha_i^{(\beta-1)/\beta}.
\end{equation}
Performing the transformation $\psi_0(t) = e^{\mu t}\varphi_0(t)$ we have $\psi'_0(t) = \mu e^{\mu t} \varphi_0(t) + e^{\mu t} \varphi'_0(t)$ and substituting in \eqref{hjb-1} we get
\begin{equation}
-\mu e^{\mu t} \varphi_0(t) - e^{\mu t} \varphi'_0(t) = \lambda e^{\mu t} \varphi_0(t) + \frac{\beta}{1-\beta} \Lambda(\alpha) e^{ -\frac{rt}{\beta} + \frac{\mu (2\beta-1)t}{\beta} } \varphi_0(t)^{\frac{(2\beta - 1)}{\beta}}
\end{equation}
Choosing $\mu$ such that $\mu = -r/(1-\beta)$ we retrieve the folowing Bernoulli-type equation
\begin{equation}
-\varphi_0'(t) = \left( \lambda - \frac{r}{1-\beta} \right) \varphi_0(t) + \frac{\beta}{1-\beta} \Lambda(\alpha) \varphi_0(t)^{(2\beta-1)/\beta} 
\end{equation}
Since a Bernoulli equation of the form $x'(t) = c_1 x(t) + c_2 x(t)^{\gamma}$ admits the explicit solution $x(t)=\left\{C e^{c_1 (1-\gamma) t} - c_2/c_2 \right\}^{1/(1-\gamma)}$, choosing $\gamma = (2\beta-1)/\beta$ we obtain the solution
\begin{equation}
\varphi_0(t) = \left\{ C e^{\frac{r-\lambda(1-\beta)}{\beta}t} + \frac{ \beta \Lambda(\alpha) }{r-\lambda(1-\beta)} \right\}^{\beta/(1-\beta)}
\end{equation}
and performing the inverse transform we get $\psi_0(t)$ as
\begin{equation}\label{eq-1}
\psi_0(t) = e^{-rt/(1-\beta)} \left\{ C e^{\frac{r-\lambda(1-\beta)}{\beta}t} + \frac{ \beta \Lambda(\alpha) }{r-\lambda(1-\beta)} \right\}^{\beta/(1-\beta)}
\end{equation}
The terminal condition $\psi_0(T)^{1-\beta} = e^{-rT}k_0^{\beta}$ is employed to calculate $C$ which results to
\begin{equation}\label{eq-2}
C = \left( k_0 - \frac{\beta \Lambda(\alpha)}{r-\lambda(1-\beta)} \right) e^{-\frac{r-\lambda(1-\beta)}{\beta}T}
\end{equation}
Then substituting \eqref{eq-2} to \eqref{eq-1} we get that
\begin{equation}\label{eq-3}
\psi_0(t) = e^{-rt/(1-\beta)}\left[ (k_0-\Lambda(\alpha)/\theta)e^{-\theta (T-t)} + \Lambda(\alpha)/\theta \right]^{\beta/(1-\beta)}
\end{equation}
where $\theta := (r-\lambda(1-\beta))/\beta$. Then, substituting \eqref{eq-3} to \eqref{eq-0} we obtain the stated result \eqref{ovf} in Proposition \ref{prop-1}. Also, combining equations \eqref{DV} and \eqref{eq-3} and substituting to \eqref{ohr-1} are obtained the optimal controls stated in \eqref{ohr} in Proposition \ref{prop-1}.
\end{proof}

Note that under the assumption that $\theta>0$ and for a terminal horizon $T$ large enough, the optimal harvest rates stated in Proposition \ref{prop-1} are asymptotically equal to 
\begin{equation}
c^*_i(t) \simeq \frac{ \alpha_i\theta}{\Lambda( \alpha)} (B_{D,i}/D_i)^{-1/\beta} \langle \alpha, k(t) \rangle 
\end{equation}
depending mainly on spatial characteristics (through the operators ($B_{D}, D$)) and the current states of natural capital at each location $k_i(t)$ for $i=1,2,...,N$. Then, working with the physical capital density dynamics using the optimal harvest rates we get:
\begin{eqnarray}
\frac{\partial}{\partial t}k_i(t) &=& [(\mathcal{L}_G + A_D)k(t)]_i - B_{D,i} c^*_i(t)\\
			&=& [(\mathcal{L}_G + A_D)k(t)]_i - \frac{\theta}{\Lambda(\alpha)} \Phi(t) B_{D,i} (B_{D,i}/D_i)^{-1/\beta}\langle \alpha, k(t) \rangle \alpha_i 
\end{eqnarray}
where $\Phi(t) := \left[ 1 + \left(\frac{\theta k_0}{\Lambda(\alpha)}-1\right)e^{-\theta(T-t)} \right]^{-1}$. So, the physical capital dynamics can be written in matrix form as
\begin{equation}
\frac{\partial}{\partial t} k(t) = (\mathcal{L}_G  + A_D) k(t) - \frac{\theta}{\Lambda(\alpha)}\Phi(t) B_D \widetilde{B} A \widetilde{A} k(t)
\end{equation}
where
\begin{eqnarray}
&& \widetilde{B} := diag( (B_{D,1}/D_1)^{-1/\beta}, (B_{D,2}/D_2)^{-1/\beta}, ..., (B_{D,N}/D_N)^{-1/\beta})\\
&& A := diag(\alpha_1, \alpha_2, ..., \alpha_N)
\end{eqnarray}

\begin{equation}
\widetilde{A} := \begin{pmatrix} \alpha_1 & \alpha_2 & \ldots  & \alpha_N\\
										   \alpha_1 & \alpha_2 & \ldots  & \alpha_N\\
										   \vdots     & \vdots     & \ddots & \vdots \\
										    \alpha_1 & \alpha_2 & \ldots  & \alpha_N \\
						\end{pmatrix} 
\end{equation}
Notice again that taking $T\to\infty$ it holds that $\Phi(t) \to 1$ simplifying the physical capital dynamics to the form
\begin{equation}
\frac{\partial}{\partial t} k(t) = (\mathcal{L}_G  + A_D) k(t) - \frac{\theta}{\Lambda(\alpha)}B_D \widetilde{B} A \widetilde{A} k(t)
\end{equation}
In this case, setting 
\begin{equation}\label{Matrix}
M := (\mathcal{L}_G  + A_D) - \frac{\theta}{\Lambda(\alpha)}B_D \widetilde{B} A \widetilde{A},
\end{equation} 
the physical capital density $k(t)$ can be described by the matrix exponential  
\begin{equation}
k(t) = e^{t M} k_0
\end{equation}
where $k_0 = \sum_{i=1}^N k_{0,i} \, e_i$ and similarly, the optimal harvest rates by the equation
\begin{equation}\label{ohr-2}
 c(t) = \frac{\theta}{\Lambda(\alpha)}\widetilde{B} A \widetilde{A} k(t) = \frac{\theta}{\Lambda(\alpha)}\widetilde{B} A \widetilde{A} e^{t M} k_0
\end{equation}

\section{Harvest Risk Estimations and Robust Harvesting Policies under Model Uncertainty}\label{sec-4}

Consider again problem \eqref{opt-2} with the role of the random variable $Z$ to be played by the initial states of the natural capital $k_0$ which is now considered to be uncertain. In practice, this a very realistic assumption since in general $k_0$ is not known precisely and maybe subject to stochastic fluctuations, leading to stochastic fluctuations of the turnout related to the harvest rate. Ambiguity follows from the inability of exact modeling or measurement of the various exogenous factors (e.g. environmental conditions) affecting the distribution of $k_0$. The decision maker would like to evaluate possible losses from such fluctuations taking into account the model uncertainty and the discussed class of risk measures offers such an evaluation. To provide a specific example, consider that the total loss from harvesting operations of a firm on the domain of interest $\Omega$ is provided by the function
\begin{equation}
L := \frac{1}{T}\sum_{i=1}^N \pi_i L_i = -\frac{1}{T}\sum_{i=1}^N \pi_i \int_0^T c_i(t; k_0)dt
\end{equation}
with $\pi \in [0,1]$ and $\sum_i \pi_i=1$ denoting the importance allocated by the firm to its harvesting activities on the sub-domains $\Omega_1,...,\Omega_N$ (since the typical risk measures framework requires loss maximization, $L$ is defined as such a loss from the harvesting operations in the domain, which is equivalent to the profit minimization from the related harvesting operation). Assume that the firm needs to determine the harvest risk for a specified time horizon $T>0$ (large enough) and under the occurring uncertainty regarding the initial states of the physical capital at each region $\Omega_i$. Since several scenarios can occur depending on the environmental conditions and other exogenous factors affecting the initial states, there in not only one probability model that can plausibly describe the states $k_0 = (k_{0,1}, k_{0,2}, ..., k_{0,N})'$, but rather a multitude of different and possibly diverging models, let us say $n$ in number, with different probability of occurrence (or degree of realism) which is allocated/determined by the risk manager of the firm as the weighting vector $w\in\Delta^{N-1}$. In this case, the risk mapping of the firm related to the harvesting activities (quantifying potential losses) is expressed by the function:
\begin{equation}\label{risk-map}
L := \Phi_0(k_0) = \sum_{i=1}^N \pi_i \Phi_{0,i}(k_0) = \sum_{i=1}^N \pi_i G_i \langle \widetilde{\alpha}, k_0 \rangle
\end{equation}
where $G_i = \frac{\alpha_i \theta}{T \Lambda(\alpha)} (B_{D,i}/D_i)^{-1/\beta}$, $\widetilde{\alpha} = -\left((e^{TM}-I)M^{-1}\right)^{-1} \alpha$ with $I$ being the identity operator and $M$ as defined in \eqref{Matrix}.

Since the modeling scheme for the natural capital dynamics after the discretization can be identified by the evolution of the capital at certain nodes of a graph, very appropriate probability models to capture the uncertainty on the initial states (i.e. initial condition on each node of the graph) may be members of the Location-Scatter family of probability models. Using such type of models for $k_0$, the ambiguity is transferred to some location parameters $m \in \R^N$ and to some dispersion parameters $S \in \mathbb{P}(N)$ \footnote{Where $\mathbb{P}(N)$ denotes the set of positive definite $N\times N$ matrices.} describing possible spatial dependencies between the graph nodes (i.e., transport phenomena between the various sub-regions of the domain $\Omega$) which fully characterize the probability model. Then, the provided prior set of models $\M=\{ Q_1,...,Q_N\}$ contains possible models for the random behaviour of the initial states $k_0$ belonging to the Location Scatter family, i.e. $Q_i = LS(m_i, S_i)$ for all $i=1,2,...,N$. 

According to the framework of {\it convex risk measures} \cite{follmer2002convex, frittelli2002} the risk related to the loss $L = \Phi_0(k_0)$, with $k_0 \sim Q$, can be calculated through the solution of the problem
\begin{eqnarray}\label{robust}
\rho(L) = \sup_{Q \in \mathcal{P}(\mathcal{S})} \left\{ \mathbb{E}_{Q}[L] - \alpha(Q) \right\}
\end{eqnarray}
where $\mathcal{P}(\mathcal{S})$ denotes the set of probability measures on the set $\mathcal{S}$ which contains the possible states of the world, and $\alpha: \mathcal{P}(\mathcal{S}) \to \R\cup\{ \infty\}$, which practically determines the risk measure, is a function that penalizes certain extreme scenarios described by any provided model $Q_0$ according to the manager's preferences. Clearly, on the discussed context, this framework is not applicable since the provided information consists of a collection of models $\M$ and not just one model. This multi-prior setting is robustly handled by the framework of Fr\'echet risk measures \cite{papayiannis2018convex}, which are a natural extension of the one prior case to the multi-prior setting, where the notion of the Fr\'echet mean \cite{frechet1948} is employed to determine an aggregate prior model using the concept of barycenter to condense and robustly represent the multiple information by a single model. The distance of a probability measure $Q$ from the prior set $\M$ is measured in terms of the Fr\'echet function $\sum_{i=1}^N w_i d^2(Q,Q_i)$, and the element of minimal distance from $\M$
\begin{equation}
Q_B := arg\min_{Q\in\P(\mathcal{S})} \sum_{i=1}^N w_i d^2(Q,Q_i)
\end{equation}
is referred to as the {\it Fr\'echet mean} (or barycenter) of $\M$, $d$ is an appropriate metric and $w\in\Delta^{N-1}$ denotes a weight vector\footnote{Where $\Delta^{N-1} := \{ w\in\R^N \,\,\, :\,\,\, \sum_{i=1}^N w_i = 1,\,\,\, w_i\geq 0, \,\,\, \forall i \}$.} representing the trustworthiness that the manager allocates to each information source that provided a model estimate. Dispersion in the space of probability measures $\P(\mathcal{S})$ is quantified through the {\it Fr\'echet variance} which is defined as the minimal value of the Fr\'echet function ${\mathbb F}_{\M}(Q) := \sum_{i=1}^N w_i d^2(Q,Q_i)$ obtained at $Q_B$, which is also used to formulate an appropriate penalty function $\alpha(\cdot)$ according to the robust representation of convex risk measures (equation \eqref{robust}), in order to penalize deviance from the prior set $\M$. This leads to the definition of the general class of Fr\'echet risk measures, while choosing metric $d$ to be equal to one of the Wasserstein distances \cite{santambrogio2015optimal, villani2021topics} (e.g. the $2$-Wasserstein distance) which is a true metric in the space of probability measures, the notion of the Wasserstein barycentric risk measures is recovered. For the convenience of the reader, the related definitions follow. 

\begin{definition}[Fr\'echet and Wasserstein barycentric risk measure \cite{papayiannis2018convex}]\label{frisk} 
Let $\alpha : \R \to \R_{+}\cup\{\infty\}$ an increasing function with $\alpha(0)=0$ and $\Phi_0  : \R^{d} \to \R_{+}$ a risk mapping w.r.t. the stochastic factors $Z:\mathcal{S}\to\R^d$. Given a prior set $\M$ for $Z$, a sufficiently smooth risk mapping $L=\Phi_0(Z)$ and a Fr\'echet function ${\mathbb F}_{\M}$ the related {\it Fr\'echet risk measure} is defined for any $\gamma\in(0,\infty)$ as
\begin{eqnarray}
\RF(L) := \sup_{Q \in \mathcal{P}(\mathcal{S})} \left\{ \mathbb{E}_{Q}[L] -\frac{1}{2 \gamma} \alpha ( {\mathbb F}_{\M}(Q) ) \right\}.
\end{eqnarray}
For a set of weights $w \in \Delta^{n-1}$, the {\it Wasserstein barycentric risk measure} $\RW$ is defined as  
\begin{eqnarray}\label{rm-1}
\RW(L) := \sup_{Q \in \P(\mathcal{S})} \left\{  \mathbb{E}_Q [L] - \frac{1}{2 \gamma} F_{\M}(Q) \right\},
\end{eqnarray}
where $F_{\M}(Q) := \sum_{i=1}^N w_i W_2^2(Q,Q_i) - V_{\M}$, $V_{\M}:= \inf_{Q \in \P(\mathcal{S})} \sum_{i=1}^N w_i W_2^2(Q,Q_i)$ and $W_2(\cdot, \cdot)$ denotes the quadratic Wasserstein distance between any two probability models in $\P(\mathcal{S})$.
\end{definition}

Note that any Fr\'echet risk measure is also a convex risk measure and the typical setting of convex risk measures is retrieved if the model set $\M$ is a singleton. The special case of the Wasserstein barycentric risk measure provided a very appropriate vehicle to condense the information provided by the prior set $\M$ to a single aggregate model through the notion of Wasserstein barycenter, working directly in the natural space of the probability measures. The maximizer of the relevant risk calculation problem \eqref{rm-1}, is a probability measure $Q_*$ depending on the prior set $\M$ but at the same time carries the manager's preferences through the sensitivity parameter $\gamma>0$ and the weights $w\in\Delta^{N-1}$. The choice of the sensitivity parameter $\gamma > 0$, quantifies the general level of trust that the manager allocates to the provided models in $\M$ while each weight in $w$ places separately to each model the manager's belief regarding the credibility or success of the model in describing the risk factors. As a result, the measure $Q_*$ could act as a very suitable worst-case type discounting mechanism in terms of model uncertainty for future losses or payoffs from the operational activities of the firm (harvesting). Conveniently, under the family of Location-Scatter type of probability measures, the derivation of closed-form or semi-analytic form of solutions is possible in many cases. Concerning the harvesting problem under study, explicit formulas for both harvest risk calculations and aggregate probability model characterizations are derived. Moreover, the risk can be allocated to each sub-domain $\Omega_i$ indicating the contribution of each region to the total harvest risk. This is possible through the implementation of the so called {\it Euler's principle} \cite{tasche2007}. According to this principle, if $\rho(L)$ expresses the total risk of a loss position $L$, then the {\it individual} risk contribution of the $j$-th component, in our case of the $j$-th sub-domain $\Omega_j$, is calculated according to the definition $\rho(L_j|L) = \frac{d\rho}{dh}(L+h L_j)|_{h=0}.$ Combining the notion of Wasserstein barycentric risk measures and the described risk allocation principle, closed-form solutions are derived for allocating the risk spatially. The results are stated in the next proposition.

\begin{proposition}\label{prop-2}
Given a set of priors models belonging to the Location-Scatter family $\mathcal{M}=\{Q_1,...,Q_N\}$ with $Q_i = LS(m_i, S_i)$, $m_i\in\R^N$, $S_i \in \mathbb{P}(N)$ for $i=1,2,...,N$, the manager's aversion preferences from the prior set $\gamma>0$ and $w\in\Delta^{N-1}$ the degree of realism that is allocated to each provided model, then with respect to the risk mapping \eqref{risk-map} and under the Wasserstein risk measure, the following results are derived:\\

\noindent A. The total harvest risk associated to the optimal control problem \eqref{opt-3} is calculated as
\begin{equation}\label{total-risk}
\rho_W( L ) = G\langle \widetilde{\alpha}, m_B \rangle + \frac{\gamma}{2} G^2 \|\widetilde{\alpha} \|_2^2,
\end{equation}
while the harvest risk allocated to the $j$-th sub-domain of operation $\Omega_j$ for $j=1,2,...,N$ is calculated as 
\begin{equation}\label{indiv-risk}
\RW(L_j | L) = G_j\langle \widetilde{\alpha}, m_B \rangle + \gamma G G_j\| \widetilde{\alpha} \|_2^2
\end{equation}
where $\widetilde{\alpha} := -\left( (e^{T M} - I) M^{-1} \right)^* \alpha$, $m_B := \sum_{i=1}^N w_i m_i$, $G_j := \frac{\theta}{T \Lambda(\alpha)}(B_{D,j}/D_j)^{-1/\beta}$, $G := \sum_{j=1}^N \pi_j G_j$ and $\Lambda(\alpha)$ as defined in \eqref{Lama}.\\

\noindent B. Assuming further that $\theta>0$ and that the terminal time horizon $T>0$ is sufficiently large, the robust optimal harvest policy\footnote{By robust optimal policy we refer to the optimal policy corresponding to the probability measure for the initial condition corresponding to the maximizing measure in the robust representation of the risk measure in \eqref{rm-1}.} related to problem \eqref{opt-3} is obtained in closed form as
\begin{equation}
\widetilde{c}(t) = \frac{\theta}{\Lambda(\alpha)} \widetilde{B}A\widetilde{A}e^{tM} k_0, \,\,\,\, k_0 \sim Q_* = LS(m_B+ \gamma G \widetilde{\alpha}, S_B)
\end{equation}
where, $Q_*$ being the maximizer of problem \eqref{rm-1} with $S_B \in \mathbb{P}(N)$ satisfying the equation $S_B = \sum_{i=1}^N w_i (S_B^{1/2} S_i S_B^{1/2})^{1/2}$.
\end{proposition}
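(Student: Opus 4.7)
The strategy rests on three observations: (i) the loss $L = G\langle\widetilde{\alpha},k_0\rangle$ with $G=\sum_j \pi_j G_j$ is an affine functional of $k_0$, so $\mathbb{E}_Q[L]$ depends on $Q$ only through its mean; (ii) for Location--Scatter priors the quadratic Wasserstein distance decomposes cleanly into a mean contribution plus a Bures--type scatter contribution; (iii) the Wasserstein barycenter of the prior set $\mathcal{M}$ has a known closed form with mean $m_B=\sum_i w_i m_i$ and scatter $S_B$ satisfying the fixed--point equation displayed in the statement.

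My first step is to reduce the variational problem in \eqref{rm-1} to the Location--Scatter family. For any $Q$ with prescribed mean $m$ and scatter $S$, the Gelbrich lower bound provides
\[
W_2^2(Q,Q_i) \geq \|m-m_i\|^2 + \mathrm{tr}\bigl(S+S_i-2(S^{1/2}S_i S^{1/2})^{1/2}\bigr),
\]
with equality attained when $Q$ itself lies in the LS family compatible with the $Q_i$. Because $\mathbb{E}_Q[L]$ depends only on $m$, replacing any candidate $Q$ by the LS measure of the same mean and scatter can only increase the objective, so the supremum in \eqref{rm-1} is attained within $\{LS(m,S):\, m\in\R^N,\, S\in\mathbb{P}(N)\}$, and the infinite--dimensional problem collapses to a finite--dimensional one.

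Next I decompose the Fr\'echet penalty. Expanding each $W_2^2(LS(m,S),Q_i)$ via the equality version of the bound, completing the square in $m$ around $m_B$, and subtracting $V_{\mathcal{M}}$ yields $F_{\mathcal{M}}(Q) = \|m-m_B\|^2 + \Psi(S)$, where $\Psi(S)\geq 0$ with $\Psi(S_B)=0$ by the fixed--point characterization of $S_B$. Since $L$ is independent of $S$, the maximizer satisfies $S=S_B$. The remaining scalar problem $\sup_m\bigl\{G\langle\widetilde{\alpha},m\rangle-\tfrac{1}{2\gamma}\|m-m_B\|^2\bigr\}$ is a strictly concave quadratic whose first--order condition gives $m^{\ast}=m_B+\gamma G\widetilde{\alpha}$; substituting $(m^\ast,S_B)$ back into the objective produces the total harvest risk formula \eqref{total-risk} and simultaneously identifies $Q_\ast=LS(m_B+\gamma G\widetilde{\alpha},S_B)$ required for Part B.

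The spatial allocation \eqref{indiv-risk} then follows from Euler's principle: perturbing $L$ to $L+hL_j$ merely replaces $G$ by $G+hG_j$ in the same linear functional, so $\rho_W(L+hL_j)$ has exactly the form of \eqref{total-risk} with $G$ shifted, and differentiation at $h=0$ produces the stated expression. For Part B, since $Q_\ast$ encodes the worst--case distribution of $k_0$ in the barycentric sense, inserting $k_0\sim Q_\ast$ into the asymptotic--$T$ optimal control \eqref{ohr-2} derived in Section \ref{sec-3} immediately delivers the robust policy. The main technical hurdle is Step 1: the defining supremum of $\rho_W$ ranges over all of $\mathcal{P}(\mathcal{S})$, and one must argue via the tightness of the Gelbrich bound on the LS family, combined with the mean--only dependence of $L$, that the problem reduces to the stated finite--dimensional concave program; once this reduction is granted, the remaining computations are algebraic and proceed along standard lines.
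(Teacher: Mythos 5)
Your proposal is correct, and it reaches the stated formulas by a genuinely more self-contained route than the paper. The paper's own proof is essentially a citation: for the affine risk mapping $\Phi_0(k_0)=\langle \beta,k_0\rangle$ with $\beta=G\widetilde{\alpha}$ it invokes Proposition 2.10 of \cite{papayiannis2018convex}, which directly supplies both the value $\rho_W(L)=\langle\beta,m_B\rangle+\tfrac{\gamma}{2}\|\beta\|_2^2$ and the maximizer $Q_*=LS(m_B+\gamma\beta,S_B)$, and then applies Euler's principle and equation \eqref{ohr-2} exactly as you do. What you have done instead is reprove that external result from first principles: the Gelbrich lower bound to collapse the supremum over $\mathcal{P}(\mathcal{S})$ onto the Location--Scatter family, the orthogonal splitting of the Fr\'echet penalty into a mean term $\|m-m_B\|^2$ (by completing the square around $m_B$) plus a nonnegative scatter term vanishing at the fixed point $S_B$, and the resulting finite-dimensional concave quadratic in $m$ whose first-order condition yields $m^*=m_B+\gamma G\widetilde{\alpha}$. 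This buys the reader a proof that does not depend on looking up the cited proposition, and it makes transparent \emph{why} the risk depends only on the location parameters (the loss is mean-only, the scatter enters the penalty separably and is optimized away at $S_B$) --- a point the paper only remarks on informally after the proof. The Euler allocation step (replacing $G$ by $G+hG_j$ and differentiating at $h=0$) and the Part B step (inserting $k_0\sim Q_*$ into the large-$T$ control \eqref{ohr-2}) coincide with the paper's argument. Two minor points: the reduction to the LS family via Gelbrich requires that the candidate $LS(m,S)$ be generated by the same base distribution as the priors $Q_i$ so that the bound is attained, which you flag as the "compatible" family but could state more explicitly; and the problem $\sup_m\{G\langle\widetilde{\alpha},m\rangle-\tfrac{1}{2\gamma}\|m-m_B\|^2\}$ is vector-valued in $m\in\R^N$ rather than scalar, though this does not affect the computation.
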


\begin{proof}[Proof of Proposition \ref{prop-2}]
A. The risk mapping which models total potential loss $L$ as expressed in \eqref{risk-map} is of affine form with respect to $k_0$. For a linear risk mapping of the form $\Phi_0(Z) = \langle \beta, Z \rangle$ and a prior set consisting of probability measures belonging to the Location-Scatter family, by applying Proposition 2.10 in \cite{papayiannis2018convex} we get that $\rho_W(L) = \langle \beta, m_B \rangle + \gamma/2 \|\beta\|^2_2$ where $m_B = \sum_i w_i m_i$. Substituting $\beta$ with $G \widetilde{\alpha}$ we get the stated result regarding the total harvest risk. Working with the linear risk mappings $\Phi(k_0) = G\langle \widetilde{\alpha}, k_0 \rangle$ and $\Phi_{0,i} = G_i \langle \widetilde{\alpha}, k_0 \rangle$ related to the firm's losses $L$ and $L_i$ for $i=1,2,...,N$, and combining the calculation result obtained in Proposition \ref{prop-1} and the definition of the Euler's risk allocation, then the stated result concerning the risk allocation is retrieved.\\

\noindent B. From Proposition 2.10 in \cite{papayiannis2018convex} it holds that the maximizer of the Wasserstein barycetric risk measure $Q_*$, for a linear risk mapping $\Phi_0(k_0) = \langle \beta, k_0 \rangle$ and a prior set $\M$ consisting of $N$ Location-Scatter type probability models, is also a member of the Location-Scatter family with location parameters $m_* = m_B + \gamma \beta$ and dispersion matrix satisfying the equation $S_* = S_B = \sum_{i=1}^N w_i (S_B^{1/2} S_i S_B^{1/2})^{1/2}$ the solution of which can be obtained numerically by the fixed-point scheme proposed in \cite{alvarez2016}. Then, the optimizer of problem \ref{opt-3} is obtained from the closed-form solution stated in \eqref{ohr-2} as a consequence of Proposition \ref{prop-1}, where $k_0$ being a random variable the random behaviour of which is described robustly by the probability measure $Q_* = LS(m_*, S_*)$. As a result, the robust optimal controls of the problem \eqref{opt-3} under model uncertainty are characterized by the model $Q_*$ as stated in Proposition \ref{prop-2}. 
\end{proof}

Note that the above results concerning the risk of the loss position depends only on the location characteristics of the aggregate probability measure $m_B \in \R^N$. This happens since the risk mapping is of affine form with respect to $k_0$. Different risk mappings may lead to analytic or semi-analytic formulations for the case of Location-Scatter families (please see Proposition 2.10 in \cite{papayiannis2018convex}). In the case where $\M$ is a singleton, the result in Proposition \ref{prop-2} holds by replacing $m_B$ with $m_0$ being the location parameters of the single prior model $Q_0$. In this case, the parameter $\gamma>0$ is realized just as aversion preferences from the certain provided model $Q_0$ since no model uncertainty exists in this case (there is only one model). 

In order to make clear the uncertainty effects alongside to the manager's preferences, consider the total averaged loss output as defined in \eqref{risk-map} which clearly depends on the states $k_0$. In the simple case where $k_0 \in \R^N$ are known, the output is a number (deterministic case). If a model $Q_0 = LS(m_0, S_0)$ is provided concerning the initial states' random behaviour, and no aversion preferences are provided ($\gamma \to 0$), then the total loss output can be considered as a random variable with distribution $LS(\beta' m_0, \beta' S_0 \beta)$. However, if aversion preferences have been set ($\gamma>0$) the distribution of the loss output is given by the model $LS(\beta'(m_0+\gamma G \widetilde{\alpha}), \beta' S_0 \beta)$, where the level of manager's disbelief concerning the provided model for $k_0$ is quantified by the difference in the location characteristics, i.e. the term $\gamma G \beta' \widetilde{\alpha}$. In the model uncertainty case, the barycenter of the provided models (as determined by the weights $w\in\Delta^{N-1}$), $Q_B = LS(m_B, S_B)$, is playing the role of the model that aggregates the information of the prior set $\M$, and any aversion preference is expressed as a deformation from this model. In particular, for the no aversion preferences from the prior set ($\gamma \to 0$), the random behaviour of the loss output is described by the model $LS(\beta' m_B, \beta' S_B \beta)$, while for any $\gamma >0$ the resulting model for $k_0$ is $LS(\beta' (m_B + \gamma G \widetilde{\alpha}), \beta' S_B \beta)$. In the later case, the difference between the aversion and no aversion case is the location deformation term $\gamma G \beta' \widetilde{\alpha}$, however the location and dispersion characteristics are estimated with respect to the weights $w\in\Delta^{N-1}$ that have been determined by the manager. Therefore, even for the same aversion parameter $\gamma>0$ but with different weight allocations, two different managers could get much different loss distributions (depending of course on the homogeneity of the models in the prior set $\M$).

\section{Conclusions}

An optimal harvesting problem under the framework of model uncertainty for the initial conditions of the underlying dynamics is studied. Employing the class of Fr\'echet risk measures, the model uncertainty is treated robustly through the notion of Wasserstein barycenter, total harvest risk and risk allocation computations are performed and analytic formulas are derived, while simultaneously robust harvesting policies are obtained in analytic form.

\section*{Acknowledgements} The author would like to thank the editor and the associated referees for their insightful comments and suggestions which helped in improving the presentation of this work.



\begin{thebibliography}{99}

\bibitem{acemoglu2009}
\newblock D. Acemoglu,
\newblock \emph{Introduction to Modern Economic Growth}
\newblock Princeton University Press, NY, 2009.

\bibitem{alvarez2016}
\newblock P. C. \'{A}lvarez-Esteban, E. Del Barrio, J. A. Cuesta-Albertos \& C. Matr\'an,
\newblock A fixed-point approach to barycenters in Wasserstein space.
\newblock \emph{Journal of Mathematical Analysis and Applications}, \textbf{441(2)}, (2016), 744--762.

\bibitem{baltas2018}
\newblock I. Baltas, A. Xepapadeas \& A. N. Yannacopoulos,
\newblock Robust portfolio decisions for financial institutions.
\newblock \emph{Journal of Dynamics \& Games}, \textbf{5(2)}, (2018), 61--94.

\bibitem{baltas2019robust}
\newblock I. Baltas, A. Xepapadeas \& A. N. Yannacopoulos,
\newblock Robust control of parabolic stochastic partial differential equations under model uncertainty.
\newblock \emph{European Journal of Control}, \textbf{46}, (2019), 1--13.

\bibitem{baltas2022}
\newblock I. Baltas, L. Dopierala, K. Kolodziejczyk, M. Szczepa\'nski, G. W. Weber \& A. N. Yannacopoulos,
\newblock Optimal management of defined contribution pension funds under the effect of inflation, mortality and uncertainty.
\newblock \emph{European Journal of Operational Research}, \textbf{298(3)}, (2022), 1162-1174.

\bibitem{bielecki2019}
\newblock T. R. Bielecki, T. Chen, I. Cialenco, A. Cousin \& M. Jeanblanc, 
\newblock Adaptive robust control under model uncertainty.
\newblock \emph{SIAM Journal on Control and Optimization}, \textbf{57(2)}, (2019), 925--946.

\bibitem{boucekkine2013}
\newblock R. Boucekkine, C. Camacho \& G. Fabbri,
\newblock Spatial dynamics and convergence: The spatial AK model. 
\newblock \emph{Journal of Economic Theory}, \textbf{148(6)}, (2013), 2719--2736.

\bibitem{brock2014a}
\newblock W. A. Brock, A. Xepapadeas \& A. N. Yannacopoulos,
\newblock Optimal control in space and time and the management of environmental resources.
\newblock \emph{Annu. Rev. Resour. Econ.}, \textbf{6(1)}, (2014), 33--68.

\bibitem{brock2014b}
\newblock W. A. Brock, A. Xepapadeas \& A. N. Yannacopoulos,
\newblock Robust control and hot spots in spatiotemporal economic systems.
\newblock \emph{Dynamic Games and Applications}, \textbf{4(3)}, (2014), 257--289.

\bibitem{brock2014c}
\newblock W. A. Brock, A. Xepapadeas \& A. N. Yannacopoulos,
\newblock Optimal agglomerations in dynamic economics.
\newblock \emph{Journal of Mathematical Economics}, \textbf{53}, (2014), 1--15.

\bibitem{clark2010}
\newblock C. W. Clark,
\newblock \emph{Mathematical bioeconomics: the mathematics of conservation}
\newblock John Wiley \& Sons, NJ, 2010.

\bibitem{fabbri2020}
\newblock G. Fabbri, S. Faggian \& G. Freni,
\newblock On Competition For Spatially Distributed Resources On Networks.
\newblock \emph{University Ca' Foscari of Venice, Dept. of Economics Research Paper Series No 7}, (2020).

\bibitem{follmer2002convex}
\newblock H. F{\"o}llmer \& A. Schied,
\newblock Convex measures of risk and trading constraints.
\newblock \emph{Finance and stochastics}, \textbf{6}, \textbf{4}, (2002), 429--447.

\bibitem{frechet1948}
\newblock M. Fr\'echet,
\newblock Les \'el\'ements al\'eatoires de nature quelconque dans un espace distanci\'e.
\newblock \emph{Annales de l'Institut de Henri Poincar\'e}, \textbf{10(4)}, (1948), 215--310.

\bibitem{frittelli2002}
\newblock M. Frittelli \& E. R. Giannin,
\newblock Putting order in risk measures.
\newblock \emph{Journal of Banking \& Finance}, \textbf{26(7)}, (2002), 1473--1486.

\bibitem{hansen2001robust}
\newblock L. Hansen, T. J. Sargent,
\newblock Robust control and model uncertainty.
\newblock \emph{American Economic Review}, \textbf{91(2)}, (2001), 60--66.

\bibitem{li2012}
\newblock Z. Li, Z. Duan, L. Xie \& X. Liu,
\newblock Distributed robust control of linear multi-agent systems with parameter uncertainties.
\newblock \emph{International Journal of Control}, \textbf{85(8)}, (2012), 1039--1050.

\bibitem{maccheroni2006}
\newblock F. Maccheroni, M. Marinacci \& A. Rustichini,		
\newblock Ambiguity aversion, robustness, and the variational representation of preferences.
\newblock \emph{Econometrica}, \textbf{74(6)}, (2006), 1447--1498.

\bibitem{papayiannis2018learning}
\newblock  G. I. Papayiannis \& A. N. Yannacopoulos,
\newblock A learning algorithm for source aggregation.
\newblock \emph{Mathematical Methods in the Applied Sciences}, \textbf{41(3)} (2018), 1033--1039.

\bibitem{papayiannis2018convex}
\newblock  G. I. Papayiannis \& A. N. Yannacopoulos,
\newblock Convex risk measures for the aggregation of multiple information sources and applications in insurance.
\newblock \emph{Scandinavian Actuarial Journal}, \textbf{9} (2018), 792--822.

\bibitem{petracou2021decision}
\newblock E. V. Petracou, A. Xepapadeas \& A. N. Yannacopoulos,
\newblock Decision Making Under Model Uncertainty: Fr{\'e}chet--Wasserstein Mean Preferences.
\newblock \emph{Management Science}, (2021).

\bibitem{santambrogio2015optimal}
\newblock F. Santambrogio,
\newblock \emph{Optimal Transport for Applied Mathematicians}
\newblock Birk{\"a}user, NY, 2015.

\bibitem{solow1999ngm}
\newblock R. M. Solow, 
\newblock \emph{Neoclassical growth theory.} 
\newblock Handbook of macroeconomics, Vol. 1, 637--667, Elsevier Science B.V., 1999.

\bibitem{tasche2007}
\newblock D. Tasche,
\newblock Capital allocation to business units and sub-portfolios: the Euler principle.
\newblock \emph{arXiv preprint}, \textit{arXiv:0708.2542} (2007).

\bibitem{villani2021topics}
\newblock C. Villani,
\newblock \emph{Topics in Optimal Transportation}
\newblock American Mathematical Soc., 2021.

\bibitem{xepa2016}
\newblock A. Xepapadeas \& A. N. Yannacopoulos,
\newblock Spatial growth with exogenous saving rates.
\newblock \emph{Journal of Mathematical Economics}, \textbf{67}, (2016), 125--137.


\end{thebibliography}
\end{document}